 \newtheorem{thm}{Theorem}[section]
 \newtheorem{lem}[thm]{Lemma}
 \theoremstyle{definition}
 \newtheorem{defn}[thm]{Definition}
 \theoremstyle{remark}
 \numberwithin{equation}{section}
\DeclareMathOperator{\tr}{trace}
\begin{document}

%
%
%
%
%

\title[On a special case of the Herbert Stahl theorem]
 {On a special case of the Herbert Stahl theorem}
\author[V. Katsnelson]{Victor Katsnelson}
\address{%
Department of Mathematics\\
The Weizmann Institute\\
76100, Rehovot\\
Israel}
\email{victor.katsnelson@weizmann.ac.il; victorkatsnelson@gmail.com}
\subjclass{Primary 15A15,15A16; Secondary 30F10,44A10}
\keywords{ BMV conjecture, absolutely monotonic functions, exponentially convex functions, positive definite functions, Lie product formula.}
\date{July 1, 2016}
\begin{abstract}
The BMV conjecture states that for \(n\times n\) Hermitian matrices \(A\) and \(B\)
the function \(f_{A,B}(t)=\tr e^{tA+B}\) is exponentially convex.
Recently the BMV conjecture was proved by Herbert Stahl. The proof of Herbert Stahl is based
on ingenious considerations related to Riemann surfaces of algebraic functions.
In the present paper we give a purely "matrix" proof of the BMV conjecture for
the special case \(\textup{rank}\,A=1\). This proof is based on the Lie product formula
for the exponential of the sum of two matrices and
does not require complex analysis.
\end{abstract}
\maketitle
 \section{Herbert Stahl's Theorem.}
In the paper \cite{BMV} a conjecture was
formulated which now is commonly known as the BMV conjecture:\\[1.0ex]
\textbf{The BMV Conjecture.} Let \(A\) and \(B\) be Hermitian matrices of size
\(n\times{}n\).
Then the function
\begin{equation}
\label{TrF}
f_{A,B}(t)=
\textup{trace}\,\{\exp[tA+B]\}
\end{equation}
of the variable \(t\) is representable as a bilateral Laplace transform of a \textsf{non-negative} measure
\(d\sigma_{A,B}(\lambda)\) compactly supported on the real axis:
\begin{equation}
\label{LaR}
f_{A,B}(t)=\!\!\int\limits_{\lambda\in(-\infty,\infty)}\!\!\exp(t\lambda)\,d\sigma_{A,B}(\lambda), \ \ \forall \,t\in(-\infty,\infty).
\end{equation}

Let us note that the function \(f_{A,B}(t)\), considered for \(t\in\mathbb{C}\), is an entire function of exponential type. The indicator diagram of the function \(f_{A,B}\) is
the closed interval \([\lambda_{\min},\lambda_{\max}]\), where \(\lambda_{\min}\) and
\(\lambda_{\max}\) are the least and the greatest eigenvalues of the matrix \(A\) respectively.
Thus if the function \(f_{A,B}(t)\) is representable in the form \eqref{LaR} with a non-negative
measure \(d\sigma_{A,B}(\lambda)\), then \(d\sigma_{A,B}(\lambda)\) is actually supported on the interval
\([\lambda_{\min},\lambda_{\max}]\) and the representation
\begin{equation}
\label{LaRm}
f_{A,B}(t)=
\hspace*{-8.0pt}\int\limits_{\lambda\in[\lambda_{\min},\lambda_{\max}]}\hspace*{-10.0pt}
\exp(t\lambda)\,\,d\sigma_{A,B}(\lambda), \ \ \forall \,t\in\mathbb{C},
\end{equation}
holds for every \(t\in\mathbb{C}\).

The representability of the function \(f_{A,B}(t)\), \eqref{TrF}, in the form \eqref{LaRm}
with a non-negative \(d\sigma_{A,B}\) is evident if the matrices \(A\) and \(B\) commute.
In this case \(d\sigma(\lambda)\) is an atomic measure supported on the spectrum of the matrix \(A\).
In general case, if the matrices \(A\) and \(B\) do not commute, the BMV conjecture remained an
open question for longer than 35 years. In 2011, Herbert
Stahl proved the BMV conjecture.\\[1.0ex]
\textbf{Theorem}\,(H.Stahl) \textit{Let \(A\) and \(B\) be \(n\times{}n\) hermitian matrices.}
\textit{Then the function \(f_{A,B}(t)\) defined by \eqref{TrF} is representable
as the bilateral Laplace transform \eqref{LaRm} of a non-negative measure \(d\sigma_{A,B}(\lambda)\)
supported on the closed interval \([\lambda_{\min},\lambda_{\max}]\).}

The first arXiv version of H.Stahl's Theorem appeared in \cite{S1}, the latest arXiv version -
in \cite{S2}, the journal publication - in \cite{S3}.
The proof of Herbert Stahl is based on ingenious considerations related to
Riemann surfaces of algebraic functions. In \cite{E1},\cite{E2} a simplified version of the Herbert Stahl proof is presented. The proof, presented in \cite{E1},\cite{E2}, preserves all the main ideas of Stahl; the simplification
consists in technical details.

In the present paper we present a proof the BMV conjecture for the special case
\(\textup{rank}\,A=1\). Our proof is based on an elementary argument which
does not require complex analysis.

\section{Exponentially convex functions.}

 \begin{defn}
 \label{decf}
A function \(f\) on \(\mathbb{R}\), \(f:\,\mathbb{R}\to[0,\infty)\), is said to be \emph{exponentially convex}
if
\begin{enumerate}
\item[\textup{1}.]
 For every nonnegative integer \(N\), for every choice of real numbers \(t_1\), \(t_2\),\(\,\ldots\,\), \(t_{N}\), and complex numbers
\(\xi_1\), \(\xi_2, \,\ldots\,, \xi_{N}\), the inequality holds
\begin{equation}
\label{pqf}
\sum\limits_{r,s=1}^{N}f(t_r+t_s)\xi_r\overline{\xi_s}\geq 0;
\end{equation}
\item[\textup{2}.]
The function \(f\) is continuous on \(\mathbb{R}\).
\end{enumerate}
\end{defn}
The class of exponentially convex functions was introduced by S.N.Bernstein, \cite{Ber1}, see \S 15 there. Russian translation of the paper \cite{Ber1} can be found in
\cite[pp.\,370–425]{Ber2}.

From \eqref{pqf} it follows that
the inequality
\begin{math}
f(t_1+t_2)\leq\sqrt{f(2t_1)f(2t_2)}
\end{math}
holds for every \(t_1\in\mathbb{R},t_2\in\mathbb{R}\). Thus the alternative takes place: \\
\textit{If \(f\) is an exponentially convex function, then either \(f(t)\equiv 0\), or \(f(t)>0\) for every \(t\in\mathbb{R}\).}
\newpage
\noindent
\begin{center}
\textbf{Properties of the class of exponentially convex functions.}
\end{center}
\begin{enumerate}
\item[\textup{P\,1}.] If \(f(t)\) is an exponentially convex function and \(c\geq0\) is a nonnegative constant, then the function \(cf(t)\) is exponentially convex.
\item[\textup{P\,2}.] If \(f_1(t)\) and \(f_2(t)\) are exponentially convex functions, then their sum
\(f_1(t)+f_2(t)\) is exponentially convex.
\item[\textup{P\,3}.] If \(f_1(t)\) and \(f_2(t)\) are exponentially convex functions, then their product
\(f_1(t)\cdot f_2(t)\) is exponentially convex.
\item[\textup{P\,4}.] Let \(\lbrace f_{n}(t)\rbrace_{1\leq n<\infty}\) be a sequence of exponentially
convex functions. We assume that for each \(t\in\mathbb{R}\) there exists the limit
\(f(t)=\lim_{n\to\infty}f_{n}(t)\), and that \(f(t)<\infty\ \forall t\in\mathbb{R}\).
Then the limiting function \(f(t)\) is exponentially convex.
\end{enumerate}

From the functional equation for the exponential function
it follows that for each real number \(\mu\), for every choice of real numbers \(t_1,t_2,\,\ldots\,\), \(t_{N}\) and complex numbers
\(\xi_1\), \(\xi_2, \,\ldots\,, \xi_{N}\), the equality holds
\begin{equation}
\label{ece}
\sum\limits_{r,s=1}^{N}e^{(t_r+t_s)\mu}\xi_r\overline{\xi_s}=
\bigg|\sum\limits_{p=1}^{N}e^{t_p\mu}\xi_p\,\bigg|^{\,2}\geq 0.
\end{equation}
The relation \eqref{ece} can be formulated as
\begin{lem}
\label{ECE}
For each real number \(\mu\), the function \(e^{t\mu}\) of the variable \(t\) is exponentially convex.
\end{lem}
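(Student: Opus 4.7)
The plan is to verify directly the two conditions of Definition~\ref{decf} for the function $f(t)=e^{t\mu}$, where $\mu\in\mathbb{R}$ is fixed. Condition~2 (continuity) is a standard property of the real exponential and needs no comment beyond noting it. All the work therefore lies in condition~1, i.e.\ in proving the positive-semidefiniteness inequality~\eqref{pqf}.

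For condition~1, I would start from the functional equation $e^{(t_r+t_s)\mu}=e^{t_r\mu}\cdot e^{t_s\mu}$, which splits the kernel into a rank-one product. Since $\mu$ and each $t_s$ are real, $e^{t_s\mu}$ is a real number, so $\overline{e^{t_s\mu}\,\xi_s}=e^{t_s\mu}\,\overline{\xi_s}$. Introducing $\eta_p:=e^{t_p\mu}\xi_p$ and expanding, one obtains
\begin{equation*}
\sum_{r,s=1}^{N}e^{(t_r+t_s)\mu}\,\xi_r\overline{\xi_s}
=\Bigl(\sum_{r=1}^{N}\eta_r\Bigr)\overline{\Bigl(\sum_{s=1}^{N}\eta_s\Bigr)}
=\Bigl|\sum_{p=1}^{N}e^{t_p\mu}\,\xi_p\Bigr|^{2}\ge 0,
\end{equation*}
which is precisely relation~\eqref{ece} already displayed in the text. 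This establishes~\eqref{pqf}.

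There is essentially no obstacle here: the lemma is an immediate reformulation of the identity~\eqref{ece}. The only point that might deserve a brief word is the use of the reality of $\mu$ and $t_s$ to pass the exponential factor through the complex conjugation; without this observation the kernel would not factor into the modulus-squared form. Combining this with continuity yields exponential convexity of $e^{t\mu}$, completing the proof.
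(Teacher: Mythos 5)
Your proof is correct and follows exactly the paper's own argument: the paper derives Lemma~\ref{ECE} directly from the identity \eqref{ece}, which is the same rank-one factorization of the kernel via the functional equation of the exponential that you use. Your extra remark about the reality of \(\mu\) and the \(t_s\) being needed to pass the exponential through the conjugation is a fair (if minor) point of added care.
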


The following result is well known.
\begin{thm}[The representation theorem]\label{RepTe} {\ }\\[-2.0ex]
\begin{enumerate}
\item[\textup{1}.]
Let \(\sigma(d\mu)\) be a nonnegative measure on the real axis,
and let the function \(f(t)\) be defined as the two-sided Laplace transform of the measure
 \(\sigma(d\mu)\):
\begin{equation}
\label{rep}
f(t)=\int\limits_{\mu\in\mathbb{R}}e^{t\mu}\,\sigma(d\mu),
\end{equation}
where the integral in the right hand side of \eqref{rep} is finite for any \(t\in\mathbb{R}\). Then the function \(f\) is exponentially convex.
 \item[\textup{2}. ] Let \(f(t)\) be an exponentially convex function. Then this function \(f\) can be
  represented on \(\mathbb{R}\)  as a two-sided Laplace transform \eqref{rep} of a nonnegative measure \(\sigma(d\mu)\).  \textup{(}In particular, the integral in the right
     hand side of \eqref{rep} is finite for any \(t\in\mathbb{R}\).\textup{)} The representing measure \(\sigma(d\mu)\) is unique.
 \end{enumerate}
\end{thm}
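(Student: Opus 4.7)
For Part 1, the plan is: assume $f$ is given by \eqref{rep}. Continuity will follow from dominated convergence, the majorant for $t$ near $t_0$ being $e^{(|t_0|+1)\mu}+e^{-(|t_0|+1)\mu}$, which is $\sigma$-integrable by hypothesis. For the positive-definiteness \eqref{pqf}, I would interchange sum and integral by Tonelli (justified by nonnegativity of the resulting integrand) and apply identity \eqref{ece} from Lemma \ref{ECE}:
\[
\sum_{r,s=1}^{N} f(t_r+t_s)\,\xi_r\overline{\xi_s}
=\int_{\mathbb{R}}\Bigl|\sum_{p=1}^{N} e^{t_p\mu}\xi_p\Bigr|^{2}\sigma(d\mu)\ge 0.
\]

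The converse (Part 2) is essentially S.N.~Bernstein's classical integral representation theorem, and my plan is to reduce it to a moment problem. First, I would extract log-convexity: taking $N=2$ in \eqref{pqf} and requiring the $2\times 2$ Gram matrix to be nonnegative gives $f(t_1+t_2)^2\le f(2t_1)f(2t_2)$, so together with the continuity hypothesis and the dichotomy $f\equiv 0$ or $f>0$ already noted after Definition \ref{decf}, $\log f$ is convex. Second, I would regularize by convolving $f$ with a nonnegative, smooth, compactly supported bump $\rho_\varepsilon$. The mollified function $f_\varepsilon$ is $C^\infty$ and again exponentially convex (the convolution is a limit of Riemann sums, so properties P\,1, P\,2, P\,4 apply), and $f_\varepsilon\to f$ locally uniformly. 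Third, differentiating \eqref{pqf} in the parameters $t_r,t_s$ shows that the Hankel matrix $\bigl(f_\varepsilon^{(i+j)}(0)\bigr)_{i,j\ge 0}$ is positive semidefinite, so Hamburger's moment theorem furnishes a nonnegative measure $\sigma_\varepsilon$ whose moments match $f_\varepsilon^{(k)}(0)$; the growth control provided by log-convexity then forces $f_\varepsilon(t)=\int e^{t\mu}\sigma_\varepsilon(d\mu)$ for all real $t$. Finally, the uniform estimates $\sigma_\varepsilon(\mathbb{R})=f_\varepsilon(0)\to f(0)$ and $\int e^{\pm T\mu}\sigma_\varepsilon(d\mu)\le f_\varepsilon(\pm T)\to f(\pm T)$ for every $T>0$ give tightness of the family $\{\sigma_\varepsilon\}$ (weighted by $\cosh(T\mu)$), and a weakly convergent subsequence produces the required measure $\sigma$.

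The main obstacle will be the third step: guaranteeing that the measure furnished by the Hankel moment problem in fact reproduces $f_\varepsilon$ on all of $\mathbb{R}$, i.e.\ that the problem is determinate and that the Taylor series at $0$ converges globally to $f_\varepsilon$. The log-convex growth bound should suffice, but an alternative route more in Bernstein's original spirit is to show first that $f$ admits a holomorphic extension to a horizontal strip in $\mathbb{C}$ and that $s\mapsto f(is)$ is a continuous positive-definite function on $\mathbb{R}$ in the Bochner sense; then Bochner's theorem gives $f(is)=\int e^{is\mu}\sigma(d\mu)$ and \eqref{rep} follows by analytic continuation. Uniqueness of $\sigma$ is then routine: two nonnegative measures with equal two-sided Laplace transforms on $\mathbb{R}$ agree on a complex neighbourhood of the imaginary axis, and Fourier uniqueness completes the argument.
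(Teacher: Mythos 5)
Your Part~1 is essentially the paper's argument run in the opposite direction: the paper declares assertion~1 an ``evident consequence'' of Lemma~\ref{ECE}, of properties P\,1, P\,2, P\,4, and of the definition of the integration operation --- i.e.\ it approximates the integral \eqref{rep} by Riemann sums of the exponentially convex functions \(e^{t\mu}\) and passes to the limit via P\,4 --- whereas you integrate the identity \eqref{ece} directly against \(\sigma(d\mu)\). The two are interchangeable, and yours is if anything cleaner; note only that no Tonelli argument is needed, since the sum is finite and each term \(e^{(t_r+t_s)\mu}\xi_r\overline{\xi_s}\) is integrable by the hypothesis that \eqref{rep} converges at the point \(t_r+t_s\), so plain linearity of the integral suffices. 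The substantive difference is in Part~2: the paper does not prove it at all, but refers the reader to Akhiezer \cite{A}, Theorem 5.5.4, and Widder \cite{Wid}, Theorem 21. Your sketch is a fair outline of that classical Bernstein--Widder argument (log-convexity, mollification, the Hankel/Hamburger moment problem, then a tightness argument in \(\varepsilon\)), and you correctly isolate the genuinely hard step: showing that the measure produced by the moment problem actually reproduces \(f_\varepsilon\) on all of \(\mathbb{R}\), which amounts to a determinacy statement, or equivalently to proving that \(f_\varepsilon\) extends holomorphically to a strip. That step is the analytic core of the cited theorems and is not supplied by your outline; the alternative route through Bochner's theorem faces the same issue, since the holomorphic extension must be established before \(f(is)\) can even be formed. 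So your proposal is at least as complete as the text it replaces --- which simply cites the literature for assertion~2 --- but it should not be mistaken for a self-contained proof of that assertion. For the logic of the paper this is harmless, since only assertion~1 (and Lemma~\ref{ECE} with properties P\,1--P\,4) is used in the proof of the main theorem; assertion~2 serves only to show that the reformulation of Stahl's theorem in terms of exponential convexity is equivalent to the original Laplace-transform statement.
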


The assertion 1 of the representation theorem is an evident consequence of Lemma~\,\ref{ECE}, of the properties P\,1, P\,2, P\,4, and of the definition of the integration operation.

 The proof of the assertion 2 can be found in \cite{A},\,Theorem 5.5.4, and in \cite{Wid},\,Theorem 21.

Thus the Herbert Stahl theorem can be reformulated as follows:\\
\textit{Let \(A\) and \(B\) be Hermitian \(n\times{}n\) matrices.
Let the function \(f_{A,B}(t)\) is defined by \eqref{TrF}
for \(t\in(-\infty,\infty)\). Then the function \(f_{A,B}(t)\), considered as a function of the variable \(t\), is exponentially convex.}

\section{A special case of the Herbert Stahl theorem}
\label{SCHT}
\begin{lem}
\label{PEx}
Let \(M\) be a Hermitian matrix. Assume that all \emph{off-diagonal} entries of the
matrix \(M\) are non-negative. Then \emph{all} entries of the matrix exponential
\(e^{M}\) are non-negative.
\end{lem}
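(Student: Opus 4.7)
The plan is to reduce to the matrix power series definition of the exponential by shifting $M$ along the diagonal so that \emph{all} entries (not only the off-diagonal ones) become non-negative. Concretely, I would pick $c \geq \max\{0,\,-\min_i M_{ii}\}$ and set $N = M + cI$. Every entry of $N$ is then non-negative: the off-diagonal entries coincide with those of $M$, which are non-negative by hypothesis, and the diagonal entries are $M_{ii} + c \geq 0$ by the choice of $c$. Since $M$ and $cI$ commute, one has the identity
\begin{equation*}
e^{M} \,=\, e^{-c}\,e^{N},
\end{equation*}
so the entries of $e^M$ are non-negative if and only if the entries of $e^N$ are non-negative.

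Next I would verify directly from the Taylor series
\begin{equation*}
e^{N} \,=\, \sum_{k=0}^{\infty}\frac{N^{k}}{k!}
\end{equation*}
that $e^N$ is entrywise non-negative. For this it suffices that each power $N^k$ be entrywise non-negative, which follows by a one-line induction on $k$: the $(i,j)$ entry of $N^{k+1}$ equals $\sum_{\ell}(N^{k})_{i\ell}\,N_{\ell j}$, a sum of products of non-negative real numbers. The partial sums of the exponential series are therefore entrywise non-negative, and an entrywise limit of non-negative matrices is non-negative.

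There is no real obstacle here: the only point worth a second glance is the commuting shift identity $e^{M+cI} = e^{c}\,e^{M}$, which is immediate from the fact that $M$ and $cI$ commute, so one can either cite the Lie product formula or compare the two series term by term. I would also note in passing that Hermiticity plays no role in the argument — what is actually used is only the non-negativity of the off-diagonal entries (together with the reality of the diagonal entries, which the Hermiticity provides).
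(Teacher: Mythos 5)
Your proposal is correct and follows essentially the same route as the paper: shift $M$ by a suitable multiple of the identity so that all entries become non-negative, observe (via the power series) that the exponential of an entrywise non-negative matrix is entrywise non-negative, and conclude from $e^{M}=e^{-c}e^{M+cI}$. The paper leaves the power-series step implicit, so your explicit induction on $N^{k}$ is a welcome but inessential elaboration.
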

\begin{proof}
Since the matrix \(M\) is Hermitian, its diagonal entries are real.
If a positive number \(\rho\) is large enough, all entries of the matrix
\(M_{\rho}\stackrel{\textup{\tiny def}}{=}M+{\rho}I\), where \(I\) is the identity
matrix, are non-negative. We choose and fix such \(\rho\). All entries of the
matrix \(e^{M_{\rho}}\) are nonnegative. Moreover \(e^M=e^{-\rho}{\cdot}\,e^{M_{\rho}}\).
\end{proof}
\begin{lem}
\label{LDP}
Let \(L\) and \(M\) be Hermitian matrices of the same size, say \(n\times n\).
 We assume that
\begin{enumerate}
\item The matrix \(L\) is diagonal;
\item All off-diagonal entries of the matrix \(M\) are non-negative.
\end{enumerate}
Then each entry of the matrix function \(e^{Lt+M}\) is an exponentially convex
function of the variable \(t\). In particular the function
\(f_{L,M}(t)=\textup{trace}\,\big[e^{Lt+M}\big]\) is exponentially convex.
\end{lem}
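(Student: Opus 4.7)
The plan is to combine the Lie product formula with the two previous lemmas and the closure properties P\,1--P\,4 of the class of exponentially convex functions. The point is that the hypotheses on \(L\) and \(M\) are designed so that, after splitting \(Lt\) from \(M\) via the Lie formula, only \emph{nonnegative} scalar combinations of products of exponentials appear.

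First I would write the Lie product formula
\begin{equation*}
e^{Lt+M}=\lim_{n\to\infty}\bigl(e^{Lt/n}\,e^{M/n}\bigr)^{n},
\end{equation*}
where the limit is entry-wise. Since \(L\) is diagonal with real entries \(\ell_{1},\dots,\ell_{n}\), the matrix \(e^{Lt/n}\) is diagonal with entries \(e^{\ell_{j}t/n}\); by Lemma~\ref{ECE} every such entry is an exponentially convex function of \(t\), and the off-diagonal zeros are trivially exponentially convex. On the other hand, \(M/n\) satisfies the hypothesis of Lemma~\ref{PEx}, so every entry of \(e^{M/n}\) is a \emph{nonnegative constant} (as a function of \(t\)).

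Next I would examine the entries of the product \(e^{Lt/n}e^{M/n}\). Each such entry is \(e^{\ell_{j}t/n}\cdot(e^{M/n})_{jk}\), that is, a nonnegative constant times an exponentially convex function, hence exponentially convex by P\,1. Iterating \(n\) times, the entries of \(\bigl(e^{Lt/n}e^{M/n}\bigr)^{n}\) are finite sums of products of such terms; by the stability of the class under sums and products (P\,2 and P\,3) every entry is exponentially convex in \(t\). Finally, P\,4 lets us pass to the limit \(n\to\infty\) entry-wise, so every entry of \(e^{Lt+M}\) is exponentially convex; the statement about the trace then follows from P\,2 applied to the \(n\) diagonal entries.

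The only step that is not a direct quotation of a prior lemma or property is the pointwise convergence needed to apply P\,4, i.e.\ the Lie product formula itself together with the fact that each entry of \(\bigl(e^{Lt/n}e^{M/n}\bigr)^{n}\) remains finite (in fact convergent) for every fixed \(t\in\mathbb{R}\); this I expect to be the main technical point, although it is standard and can be quoted. Everything else is a bookkeeping exercise in which the diagonal shape of \(L\) ensures that the time-dependent factor in each matrix product stays a pure exponential \(e^{\mu t}\), and the sign hypothesis on \(M\) guarantees that the time-independent coefficients multiplying these exponentials are nonnegative, so no cancellation can spoil exponential convexity before the limit is taken.
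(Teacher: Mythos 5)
Your proposal is correct and follows essentially the same route as the paper: the Lie product formula splits \(e^{Lt+M}\) into powers of \(e^{Lt/p}e^{M/p}\), whose entries are nonnegative combinations of real exponentials, and the closure properties P\,1--P\,4 carry exponential convexity through products, sums, and the limit. The only cosmetic issue is your reuse of \(n\) both for the matrix size and for the index in the Lie limit; the paper uses a separate index \(p\) to avoid this clash.
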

\begin{proof} We use the Lie product formula\footnote{See \cite[Theorem 2.10]{Ha}.}:
\begin{equation}
e^{X+Y}=\lim\limits_{p\to\infty}\big(e^{\frac{X}{p}}e^{\frac{Y}{p}}\big)^p,
\label{Lpf}
\end{equation}
where \(X\) and \(Y\) are arbitrary square matrices of the same size.
Taking \(X=Lt,\,Y=M\), we obtain
\begin{equation}
\label{sLpf}
e^{Lt+M}=\lim\limits_{p\to\infty}\big(e^{\frac{Lt}{p}}\cdot e^{\frac{M}{p}}\big)^p.
\end{equation}
According to Lemma \ref{PEx}, all entries of the matrix \(e^{M/p}\) are non-negative numbers.
Since the matrix \(L\) is Hermitian, its diagonal entries are real numbers.
Therefore, the matrix function \(e^{Lt/p}\) is of form
\[e^{Lt/p}=\textup{diag}\,\big(e^{l_1t/p},\,\ldots\,,e^{l_nt/p}\big),\]
where \(l_1,\,\ldots\,,l_n\) are real numbers. The exponentials \(e^{l_jt/p}\)
are exponentially convex functions of \(t\). Each entry of the matrix
\(e^{\frac{Lt}{p}}\cdot e^{\frac{M}{p}}\) is a linear combination of these exponentials with non-negative coefficients. According to the properties P1 and P2,
the entries of the matrix function \(e^{\frac{Lt}{p}}\cdot e^{\frac{M}{p}}\) are
exponentially convex functions. Each entry of the matrix function
\(\big(e^{\frac{Lt}{p}}\cdot e^{\frac{M}{p}}\big)^p\) is a sum of products
of some entries of the matrix function \(e^{\frac{Lt}{p}}\cdot e^{\frac{M}{p}}\).
According to the properties P2 and P3, the entries of the matrix function
\(\big(e^{\frac{Lt}{p}}\cdot e^{\frac{M}{p}}\big)^p\) are exponentially convex functions. From the limiting relation \eqref{sLpf} and from the property P4 it follows that all entries of the matrix \(e^{Lt+M}\) are exponentially convex functions. All the more, the function \(f_{L,M}(t)\), which is the sum of the diagonal entries, is exponentially convex.
\end{proof}
\begin{thm}
Let \(A\) and \(B\) be Hermitian matrices of size \(n\times n\).
Assume moreover that  the matrix  \(A\) is of rank one. Then the function
\(f_{A,B}(t)\), defined by \eqref{TrF}, is exponentially convex.
\end{thm}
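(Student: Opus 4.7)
The plan is to reduce to Lemma \ref{LDP} by choosing an orthonormal basis of $\mathbb{C}^{n}$ in which $A$ is diagonal and $B$ has non-negative off-diagonal entries; this is legitimate because $f_{A,B}(t)=\mathrm{trace}\,e^{tA+B}$ is invariant under simultaneous unitary conjugation of $A$ and $B$. Write the Hermitian rank-one matrix as $A=\alpha vv^{*}$ with $v\in\mathbb{C}^{n}$ a unit vector and $\alpha\in\mathbb{R}$; the case $\alpha=0$ makes $f_{A,B}$ a positive constant (trivially exponentially convex), so assume $\alpha\neq 0$. In any orthonormal basis whose first vector is $v$, the matrix $A$ is represented by $\alpha e_{1}e_{1}^{*}$, i.e.\ diagonal. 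The remaining freedom in the choice of the other basis vectors must be spent on $B$.

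For this I would use the Lanczos tridiagonalization of $B$ started at $q_{0}=v$: the three-term recursion
\[
 \beta_{k+1}q_{k+1}=Bq_{k}-\alpha_{k}q_{k}-\beta_{k}q_{k-1},\qquad \alpha_{k}=q_{k}^{*}Bq_{k},\ \ \beta_{k+1}=\|Bq_{k}-\alpha_{k}q_{k}-\beta_{k}q_{k-1}\|,
\]
produces an orthonormal basis $q_{0},q_{1},\ldots,q_{m-1}$ of the Krylov subspace $K=\mathrm{span}\{v,Bv,B^{2}v,\ldots\}$ in which $B|_{K}$ is a Jacobi matrix: its sub- and super-diagonal entries are the numbers $\beta_{k}\geq 0$ and all other off-diagonal entries are zero. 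Extending the $q_{j}$ by an arbitrary orthonormal basis of $K^{\perp}$ gives an orthonormal basis of $\mathbb{C}^{n}$. Since $K$ is $B$-invariant by construction, so is $K^{\perp}$ (by Hermiticity of $B$), and $A$ vanishes on $K^{\perp}$ because $v\in K$. Thus $tA+B$ splits as a direct sum on $K\oplus K^{\perp}$ and
\[
 f_{A,B}(t)=\mathrm{trace}\,e^{tA|_{K}+B|_{K}}+\mathrm{trace}\,e^{B|_{K^{\perp}}}.
\]
Lemma \ref{LDP} applies to the first summand with $L=A|_{K}$ diagonal and $M=B|_{K}$ having non-negative off-diagonals, so this term is exponentially convex; the second summand is a non-negative constant, trivially exponentially convex. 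Property P\,2 then yields the conclusion.

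The main obstacle is precisely the simultaneous normalization of $A$ and $B$. Diagonal phase conjugation only provides $n$ free phases, while a Hermitian $B$ has roughly $n^{2}/2$ independent off-diagonal entries, so one cannot hope to rephase an arbitrary $B$ into one with non-negative off-diagonals. The rank-one hypothesis on $A$ is the escape hatch: it fixes only the single direction $v$ and leaves the remaining $n-1$ basis vectors entirely free, and the Lanczos algorithm exploits exactly this freedom to tridiagonalize $B|_{K}$ while forcing the surviving off-diagonal entries (the $\beta_{k}$) to be non-negative by construction. The possible failure of $v$ to be cyclic for $B$, i.e.\ $K\ne\mathbb{C}^{n}$, is absorbed harmlessly by the $K^{\perp}$ summand above. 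Beyond rank one this escape hatch closes, which is why a fundamentally different idea (Stahl's complex-analytic argument) is needed in general.
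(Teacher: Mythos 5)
Your proof is correct, and it reaches the paper's key lemma (Lemma \ref{LDP}) by a genuinely different normalization of $B$. The paper, after diagonalizing $A=\mathrm{diag}(0,\dots,0,\lambda_n)$, spends the residual unitary freedom on the $(n-1)$-dimensional kernel of $A$ in two steps: it diagonalizes the compression $B_{n-1}$ of $B$ to that subspace, and then applies a diagonal phase conjugation (which leaves the diagonal block untouched) to make the border column entrywise non-negative. The result is an ``arrow'' matrix $M$ whose off-diagonal entries are either $0$ or $|\gamma_j|$. You instead run the Lanczos recursion from $v$, producing a Jacobi (tridiagonal) form of $B$ on the Krylov subspace $K$ with non-negative off-diagonals $\beta_k$, and you absorb the non-cyclic case into a $B$-invariant complement $K^{\perp}$ on which $A$ vanishes, splitting the trace into an exponentially convex piece plus a positive constant and finishing with property P\,2. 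Both constructions exploit exactly the same escape hatch you identify --- the rank-one hypothesis pins down only the single direction $v$, leaving $n-1$ basis vectors free to normalize $B$ --- and both feed Lemma \ref{LDP} a diagonal $L$ and an $M$ with non-negative off-diagonal entries. The paper's arrow form is marginally more economical in that it needs no direct-sum splitting and diagonalizes all of $B_{n-1}$ at once; your Jacobi form is the more classical reduction and makes the non-negativity of the surviving off-diagonal entries automatic (they are norms) rather than arranged by a phase rotation. Your closing remark on why the argument cannot extend beyond rank one ($n$ available phases against order $n^{2}/2$ off-diagonal entries) is a fair account of the obstruction.
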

\begin{proof} {\ }\\
\textbf{1}. If \(U\) is an unitary matrix, then
\(f_{A,B}(t)=f_{UAU^{\ast}\!,\,UBU^{\ast}}(t).\)
 Since \(\textup{rank}\,A=1\), we can choose the matrix \(U\) such that
 the matrix \(UAU^{\ast}\) are of the form
 \(UAU^{\ast}=\textup{diag}(\lambda_1,\,\ldots\,,\lambda_{n-1},\lambda_n)\),
 where \(\lambda_j=0,\,1\leq j\leq n-1\), \(\lambda_n\not=0\).
 We choose and fix such unitary matrix \(U\).\\
 \textbf{2}. Thus from the very beginning we can assume that the matrices \(A\)
 and \(B\) are of the form
 \begin{equation*}
 A=
 \begin{bmatrix}
 \textup{\Large 0}_{n-1}&0_{n-1}\\
 0_{n-1}^{\ast}&\lambda_n
 \end{bmatrix}
 \ , \quad
 B=
 \begin{bmatrix}
 B_{n-1}&b_{n-1}\\
 b_{n-1}^{\ast}&\mu_n
 \end{bmatrix}
 \,,
 \end{equation*}
 where \(\textup{\Large 0}_{n-1}\) is the \emph{zero} matrix of size \((n-1)\times(n-1)\),
 \(0_{n-1}\) is the \emph{zero} column of size \(n-1\), \(\lambda_n\not=0\)
 is a real number, \(B_{n-1}\) is the a Hermitian matrix of size \((n-1)\times(n-1)\), \(b_{n-1}\) is a column of size \(n-1\),
 \(\mu_n\) is a real number.

 There exists an unitary matrix \(V_{n-1}\) of size \((n-1)\times(n-1)\)
 such that the matrix \(V_{n-1}B_{n-1}V_{n-1}^{\ast}\) is diagonal.
 We choose and fix such matrix \(V_{n-1}\). We define the matrices
 \begin{gather*}
 M_{n-1}\stackrel{\textup{\tiny def}}{=}V_{n-1}B_{n-1}V_{n-1}^{\ast},\quad g\stackrel{\textup{\tiny def}}{=}V_{n-1}b_{n-1}.
 \end{gather*}
 The matrix \(M_{n-1}\) is a diagonal matrix of size \((n-1)\times(n-1)\):
 \begin{equation*}
 M_{n-1}=\textup{diag}(\mu_1,\,\ldots\,,\mu_{n-1}).
 \end{equation*}
 The matrix \(g\) is a column of size \(n-1\):
 \begin{equation*}
 g=
 \begin{bmatrix}
 \gamma_1 \\
 \vdots \\
 \gamma_{n-1}
 \end{bmatrix},
 \end{equation*}
 where \(\gamma_j,\,1\leq j\leq n-1,\) are complex numbers. Let us define
 numbers \(\omega_j\) which satisfy the conditions
 \begin{gather*}
 |\omega_j|=1,\ \ \omega_j\cdot \gamma_j=|\gamma_j|,\qquad 1\leq j\leq n-1.
 \end{gather*}
 If \(\gamma_j\not=0\) for some \(j\), then such \(\omega_j\) is unique. If
 \(\gamma_j=0\) for some \(j\), we take \(\omega_j=1\). Let us define
 the diagonal matrix \(\Omega_{n-1}\) of size \((n-1)\times(n-1)\) as
 \begin{gather*}
 \Omega_{n-1}=\textup{diag}\,(\omega_1,\,\ldots,,\omega_{n-1}).
 \end{gather*}
According to the construction of the matrix \(\Omega_{n-1}\),
all entries of the column
\begin{equation*}
|g|\stackrel{\textup{\tiny def}}{=}\Omega_{n-1}\,g
\end{equation*}
are non-negative numbers:
\begin{equation*}
 |g|=
 \begin{bmatrix}
 |\gamma_1| \\
 \vdots \\
 |\gamma_{n-1}|
 \end{bmatrix}.
 \end{equation*}
 Moreover
 \begin{equation*}
 \Omega_{n-1}M_{n-1}\Omega_{n-1}^{\ast}=M_{n-1}.
 \end{equation*}
 We introduce the matrices \(W_{n-1}=\Omega_{n-1}V_{n-1}\),
 \begin{equation*}
 W=
 \begin{bmatrix}
 W_{n-1}&0_{n-1}\\
 0_{n-1}^{\ast}&1
 \end{bmatrix}\,\cdot
 \end{equation*}
  The equalities
\begin{equation*}
WAW^{\ast}=L,\qquad WBW^{\ast}=M
\end{equation*}
hold, where
 \begin{equation*}
 L=
 \begin{bmatrix}
 \textup{\Large 0}_{n-1}&0_{n-1}\\
 0_{n-1}^{\ast}&\lambda
 \end{bmatrix}
 \ , \quad
 M=
 \begin{bmatrix}
 M_{n-1}&|g|\\
 |g|^{\ast}&\mu_n
 \end{bmatrix}
 \,\cdot
\end{equation*}
\textbf{3}.
The matrix \(W\) is unitary. Therefore
\begin{equation*}
f_{A,B}(t)=f_{L,M}(t).
\end{equation*}
The matrix \(L\) is diagonal. (Actually \(L=A\) .) Off-diagonal entries
of the matrix \(M\) are non-negative numbers:
\begin{equation*}
m_{j,k}=0,\  1\leq j<k<n;\qquad m_{j,n}=|\gamma_j|, \ 1\leq j\leq n-1.
\end{equation*}
According to Lemma \ref{LDP}, the function \(f_{L,M}(t)\) is exponentially convex.
\end{proof}

\end{document}